\newcommand{\distribution}{f}
\newcommand{\latticevelocity}{\lambda}
\newcommand{\momentum}{m}
\newcommand{\average}[1]{\overline{#1}}
\newcommand{\vectorial}[1]{\bm{#1}}
\newcommand{\operatorial}[1]{\bm{#1}}
\newcommand{\levelletter}{\ell}
\newcommand{\maxlevel}{\levelletter_{\text{max}}}
\newcommand{\minlevel}{\levelletter_{\text{min}}}
\newcommand{\indexletter}{k}
\newcommand{\populationindex}{\alpha}
\newcommand{\velocityletter}{e}
\newcommand{\normalizedvelocityletter}{c}
\newcommand{\superscript}[2]{#1^{#2}}
\newcommand{\subscript}[2]{#1_{#2}}
\newcommand{\collided}{\star}
\newcommand{\cellletter}{C}
\newcommand{\timestep}{\Delta t}
\newcommand{\spacestep}{\Delta x}
\newcommand{\leveldifference}{\Delta \levelletter}
\newcommand{\adaptiveroundbrackets}[1]{\left ( #1 \right )}
\newcommand{\velocitynumber}{q}
\newcommand\reallywidehat[1]{%
\savestack{\tmpbox}{\stretchto{%
  \scaleto{%
    \scalerel*[\widthof{\ensuremath{#1}}]{\kern-.6pt\bigwedge\kern-.6pt}%
    {\rule[-\textheight/2]{1ex}{\textheight}}
  }{\textheight}%
}{0.5ex}}%
\stackon[1pt]{#1}{\tmpbox}%
}
\newcommand\reallywidedoublehat[1]{%
\savestack{\tmpbox}{\stretchto{%
  \scaleto{%
    \scalerel*[\widthof{\ensuremath{#1}}]{\kern-.6pt\bigwedge\kern-.6pt}%
    {\rule[-\textheight/2]{1ex}{\textheight}}
  }{\textheight}%
}{0.5ex}}%
\stackon[-0.3mm]{\stackon[1pt]{#1}{\tmpbox}}{\tmpbox}%
}
\newcommand{\reconstructed}[3]{\reallywidedoublehat{#1} \subscript{\superscript{\vphantom{#1}}{#2}}{#3}}
\newcommand{\reconstructionweight}{C}
\newcommand{\levelleft}{\levelletter_{\text{left}}}
\newcommand{\levelright}{\levelletter_{\text{right}}}
\newcommand{\leveljump}{\levelletter_{\text{jump}}}
\newcommand{\bigO}[1]{\mathcal{O}(#1)}
\title{Does the multiresolution lattice Boltzmann method allow to deal with waves passing through mesh jumps?}
\author{\firstname{Thomas} \lastname{Bellotti}\CDRorcid{0000-0002-4139-075X}}
\address{\label{address_X}CMAP, CNRS, Ecole polytechnique, Institut Polytechnique de Paris, 91128 Palaiseau Cedex, France.}
\email[T. Bellotti]{thomas.bellotti@polytechnique.edu}
\author{\firstname{Lo\"{\i}c} \lastname{Gouarin}}
\address{\label{address_X}CMAP, CNRS, Ecole polytechnique, Institut Polytechnique de Paris, 91128 Palaiseau Cedex, France.}
\email[L. Gouarin]{loic.gouarin@polytechnique.edu}
\author{\firstname{Benjamin} \lastname{Graille}\CDRorcid{0000-0002-6287-2627}}
\address{\label{address_ors}Université Paris-Saclay, CNRS, Laboratoire de mathématiques d’Orsay, 91405, Orsay, France.}
\email[B. Graille]{benjamin.graille@universite-paris-saclay.fr}
\author{\firstname{Marc} \lastname{Massot}\CDRorcid{0000-0001-8823-7667}}
\address{\label{address_X}CMAP, CNRS, Ecole polytechnique, Institut Polytechnique de Paris, 91128 Palaiseau Cedex, France.}
\email[M. Massot]{marc.massot@polytechnique.edu}
\thanks{The first author is supported by a PhD funding (year 2019) from the Ecole polytechnique.}
\subjclass{65M99, 65M50, 76M28}
\begin{abstract} 
  We consider an adaptive multiresolution-based lattice Boltzmann scheme, which we have recently introduced and 
  studied from the perspective of the error control and the theory of the equivalent equations. This numerical strategy leads to high compression rates, error control and 
   its high accuracy has been explained on uniform and dynamically adaptive grids. However, one
   key issue with non-uniform meshes within the framework of lattice Boltzmann schemes is to properly handle  acoustic waves passing through a level jump of the grid. It usually yields spurious effects, in particular reflected waves. In this paper, we propose a simple mono-dimensional test-case for the linear wave equation with a fixed adapted mesh characterized by a potentially large level jump. We investigate this configuration with our original strategy and prove that we can handle and control the amplitude of    the reflected wave, which is of fourth order in the space step of the finest mesh. Numerical illustrations show that the proposed strategy outperforms the existing methods in the literature and allow to assess the ability of the method to handle the mesh jump properly.
\end{abstract}
\begin{document}
\maketitle

\section{Introduction}

In \cite{bellotti2021multiresolution1d, bellotti2021multidimensional2d} we have proposed a novel approach to perform time-dynamic grid adaptation and to devise lattice Boltzmann schemes to be deployed on such adaptive meshes using multiresolution analysis. This framework guarantees reduced memory footprints for problems with steep solution and potential gains in terms of computational time. 
This is obtained by storing the solution solely at the local level of refinement, which is essentially coarse in the case of problems involving shocks, see \cite{duarte2012new}.
Moreover, the preeminent feature of this strategy is that it yields a precise bound on the additional numerical error caused by the 
compression of the solution on a dynamically adapted grid 
coupled with the numerical scheme, thanks to the information on the local smoothness of the solution provided by multiresolution.
These peculiarities have been thoroughly investigated in these two works.
The next question was to clarify -- besides the previously elucidated error control -- the potential alteration of the physics of the simulated system by the adaptive method, which has been fully studied in \cite{bellotti2021lbmmreqeq}, concluding that our method of choice does not alter the reference lattice Boltzmann scheme up to local contributions of order four in the space step. This performance seems unprecedented in the literature.
This analysis has been carried out on uniform or locally uniform grids for smooth solutions using the equivalent equations \cite{dubois2008equivalent}. When non-smooth solutions are obtained, one must utilize the time-adaptive refinement to allow for the representation of singularities at the finest level of resolution.

The question which stimulated the present work has been raised by Pierre Lallemand and Fran\c{c}ois Dubois during a seminary at the IHP in Paris and is the following: how does the scheme introduced in our previous works \cite{bellotti2021multiresolution1d, bellotti2021multidimensional2d, bellotti2021lbmmreqeq} behave once an acoustic wave is forced to pass through grids of different resolutions, like the one on Fig.~ \ref{fig:configuration}? 
We emphasize that in the context of dynamic mesh adaptation, the only case in which waves could penetrate through a level jump is when they are locally very smooth.
For any numerical solver dealing with adapted meshes, beyond lattice Boltzmann schemes, it is known that a wave passing through a grid transition normally splits into two parts: the first one propagating through the interface and the second one which is reflected back.
The phenomenological reason for this is the different acoustic impedance of two media made up of grids at different levels of refinement.
Generally speaking, the aim is to devise numerical approaches which minimize the amplitude of the reflected waves.
This is particularly important in applications such as aeroacoustics \cite{gendre2017grid}, where spurious currents on the density field can have important consequences on the solution of the problem.

Many strategies, both for the representation of an adaptive grid and for the construction of adaptive lattice Boltzmann methods have been explored. For a review on the different techniques and the issues linked with grid transitions, the reader can consult \cite{lagrava2012revisiting} and \cite{horstmann2018hybrid}.
There is no widespread consensus on a standard test case to analyze the issue of reflected waves: we therefore consider a basic one-dimensional configuration which already introduces the main difficulties of more realistic multidimensional systems (\emph{e.g.} the D2Q9 scheme) to be found in the available literature, arising from the applications.
We also tested\footnote{The source code is available on \url{https://github.com/hpc-maths/samurai_cras_2021}.} our approach against the two-dimensional acoustic pulse test case from \cite{gendre2017grid, astoul2021lattice} yielding results -- not presented in this note -- fully compatible with the simpler analysis introduced here.


\section{Numerical method}

Let us briefly sketch the structure of our numerical method in order to make the paper self-contained.
The interested reader can consult \cite{bellotti2021multiresolution1d, bellotti2021multidimensional2d, bellotti2021lbmmreqeq} for more precision.

\subsection{Space and time discretization}

For the sake of presentation, we consider a one-dimensional setting on the bounded domain $\Omega = [0, 3]$. 
As in our previous work  \cite{bellotti2021lbmmreqeq} and references therein, we take cells $\cellletter_{\levelletter, \indexletter} = \left [2^{-\levelletter} \indexletter, 2^{-\levelletter}(\indexletter + 1) \right ]$ for levels of resolution $\levelletter \in \{ \minlevel, \dots, \maxlevel \}$ and $\indexletter \in \{0, \dots, 3 \times 2^{\leveldifference}-1  \}$, where $\leveldifference = \maxlevel - \levelletter$. The cell center is denoted by $x_{\levelletter, \indexletter}$.
Therefore, the space step of the grid at finest level of resolution is $\spacestep = 2^{-\maxlevel}$.
Cells with different levels $\levelletter$ can be employed to cover the domain $\Omega$ of interest as in Fig.~ \ref{fig:configuration}, which also illustrates the experimental configuration used in the sequel.

The time discretization is uniform with step $\timestep = \spacestep / \latticevelocity$, where $\latticevelocity > 0$ is the so-called ``lattice velocity''. The time-step is the same for all the levels $\levelletter \in \{ \minlevel, \dots, \maxlevel \}$ and determined by $\maxlevel$.
In many approaches available in the literature, level-wise time steps are employed. However, local time stepping would require a lot of attention and computational effort to preserve error control, thus preventing from a significant cost reduction (see \cite{bellotti2021multiresolution1d}, \cite{deiterding2016comparison} and \cite{lopes2019local}).

\begin{figure}
    \begin{center}
        \def\svgwidth{1.\textwidth}
        \input{./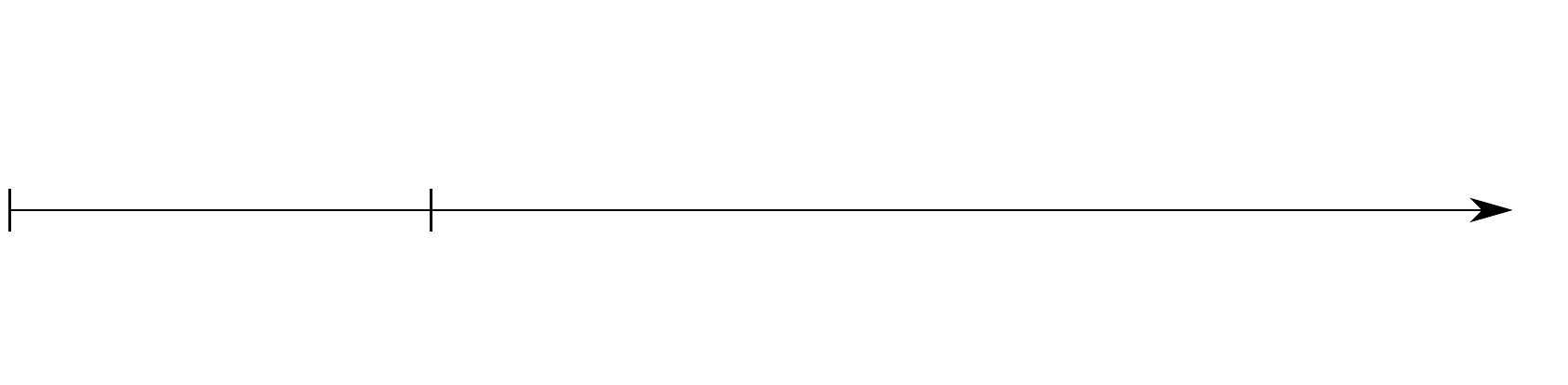_tex}
    \end{center}\caption{\label{fig:configuration}Example of domain $\Omega = \Omega_{\text{left}} \cup \Omega_{\text{right}}$ with $\Omega_{\text{left}} = [0, 2]$ finely meshed (green full line) and $\Omega_{\text{right}} = [2, 3]$ coarsely meshed (blue full line). Dashed lines represent ghost cells, where the solution needs to be updated to deploy the adaptive scheme.}
\end{figure}

\subsection{Lattice Boltzmann method}

The lattice Boltzmann method is a numerical algorithm based on  $\velocitynumber \in \mathbb{N^{\star}}$ velocities $({\velocityletter}_{\populationindex})_{\populationindex = 0}^{\populationindex = \velocitynumber - 1} \subset \latticevelocity \mathbb{Z}$ compatible with the lattice, with dimensionless counterparts ${\normalizedvelocityletter}_{\populationindex} := {\velocityletter}_{\populationindex}/\latticevelocity \in \mathbb{Z}$, $\populationindex = 0, \dots, \velocitynumber-1$.
The distribution function of the population moving with velocity ${\velocityletter}_{\populationindex}$ shall be denoted $\distribution^{\populationindex}$. 
The algorithm is made up of two phases at each time step.

\subsubsection{Collision phase}

For every cell $\cellletter_{\levelletter, \indexletter}$ belonging to the mesh, the collision phase is performed locally and is a diagonal relaxation in the space of the moments
\begin{equation*}
    \begin{cases}
        \average{\vectorial{\momentum}}_{\levelletter, {\indexletter}}(t) &= \operatorial{M} \average{\vectorial{\distribution}}_{\levelletter, {\indexletter}}(t), \nonumber \\
        \average{\vectorial{\distribution}}_{\levelletter, {\indexletter}}^{\collided} (t) &= \operatorial{M}^{-1} \adaptiveroundbrackets{(\operatorial{I} - \operatorial{S})  \average{\vectorial{\momentum}}_{\levelletter, {\indexletter}}(t) + \operatorial{S} \vectorial{\momentum}^{\text{eq}} (\average{\momentum}_{\levelletter, {\indexletter}}^{0}(t), \dots, \average{\momentum}_{\levelletter, {\indexletter}}^{\velocitynumber_c - 1}(t))},
    \end{cases}
\end{equation*}
where $\operatorial{M} \in \text{GL}(\velocitynumber, \mathbb{R})$, $\operatorial{S} \in \mathcal{M}_{\velocitynumber}(\mathbb{R})$ diagonal with $\text{rank}(\operatorial{S}) = \velocitynumber - \velocitynumber_{c}$ where the non-zero\footnote{Zero relaxation parameters are customarly taken for the conserved moments. However, different choices are present in the literature. They only influence the algorithm once introducing forcing terms \emph{via} the equilibria.} entries belong to $]0, 2]$. Here, $q_c$ is the number of conserved moments and the vector of moments at equilibrium $\vectorial{\momentum}^{\text{eq}}$ is a function of these conserved moments.

\subsubsection{Transport phase}

For every cell $\cellletter_{\levelletter, \indexletter}$ belonging to the mesh, the transport phase does not mix different populations. For every $\populationindex \in \{ 0, \dots, \velocitynumber-1 \}$ it comes under the form
\begin{equation}\label{eq:StreamPhase}
    \average{\distribution}_{\levelletter, \indexletter}^{\populationindex}(t + \timestep) = \average{\distribution}_{\levelletter, \indexletter}^{\populationindex, \star}(t) + \frac{1}{2^{\leveldifference}} \sum_{m = -2}^{2} \reconstructionweight_{\leveldifference, m}^{\populationindex} \average{\distribution}_{\levelletter, \indexletter + m}^{\populationindex, \star}(t),
\end{equation}
with weights recursively defined by
\begin{equation*}
    \begin{pmatrix}
        \reconstructionweight_{\leveldifference, -2}^{\populationindex} \\
        \reconstructionweight_{\leveldifference, -1}^{\populationindex} \\
        \reconstructionweight_{\leveldifference,  0}^{\populationindex} \\
        \reconstructionweight_{\leveldifference,  1}^{\populationindex} \\
        \reconstructionweight_{\leveldifference,  2}^{\populationindex}
    \end{pmatrix}
    = \begin{pmatrix}
        0 & -1/8 & 0 & 0 & 0 \\
        2 & 9/8 & 0 & -1/8 & 0 \\
        0 & 9/8 & 2 & 9/8 & 0 \\
        0 & -1/8 & 0 & 9/8 & 2 \\
        0 & 0 & 0 & -1/8 & 0
      \end{pmatrix}
    \begin{pmatrix}
        \reconstructionweight_{\leveldifference-1, -2}^{\populationindex} \\
        \reconstructionweight_{\leveldifference-1, -1}^{\populationindex} \\
        \reconstructionweight_{\leveldifference-1,  0}^{\populationindex} \\
        \reconstructionweight_{\leveldifference-1,  1}^{\populationindex} \\
        \reconstructionweight_{\leveldifference-1,  2}^{\populationindex}
    \end{pmatrix}, \quad \text{and} \quad
    \begin{cases}
        \reconstructionweight_{0, -\normalizedvelocityletter_{\populationindex}}^{\populationindex} &= 1, \\
        \reconstructionweight_{0, 0}^{\populationindex} &= -1, \\
        \reconstructionweight_{0, m}^{\populationindex} &= 0, \quad m \notin \{ 0, -\normalizedvelocityletter_{\populationindex} \}.
    \end{cases}
\end{equation*}
The transport phase \eqref{eq:StreamPhase} needs information stored on ghost cells (dashed in Fig.~ \ref{fig:configuration}), which thus must be updated. This is done by averaging for the ghost cells underneath more refined cells followed by the use of interpolations (from which \eqref{eq:StreamPhase} is recovered, see \cite{bellotti2021lbmmreqeq} for the details) for ghost cells above coarser cells.
It should be observed that the proposed method exactly conserves the moments which are conserved by the original lattice Boltzmann scheme, as remarked in \cite{bellotti2021multiresolution1d,bellotti2021multidimensional2d}.
Moreover, we stress that the aim of our numerical strategy is not to change the features of the original (on the uniform mesh) lattice Boltzmann scheme, with its strengths and weaknesses, but rather to minimize its perturbation once introducing non-uniform meshes.

\section{Target problem, results and discussion}

We introduce a simple system which sustains travelling acoustic waves which are eventually sent against a level jump in the computational grid.

\subsection{Equations and numerical scheme}

The target equation is the linear wave equation with velocity $c > 0$ (taken  $c = 1/2$ in the experiments, but any other value $c < 1$ would be fine) over the whole real line $\mathbb{R}$
\begin{equation*}
    \begin{cases}
        \partial_{tt} u - c^2 \partial_{xx} u = 0, \\
        u(t = 0, x) = u_0(x), \\
        \partial_t u(t = 0, x) = 0,
    \end{cases} \quad \Leftrightarrow \quad
    \begin{cases}
        \partial_{t} u + \partial_{x} v = 0, \\
        \partial_{t} v + c^2\partial_{x} u = 0, \\
        u(t = 0, x) = u_0(x), \\
        v(t = 0, x) = 0,
    \end{cases} 
\end{equation*}
which has been recast under the form of first order system for simulating it. 
For this test we simulate on the bounded domain $\Omega = \Omega_{\text{left}} \cup \Omega_{\text{right}}$ with $\Omega_{\text{left}} = [0, 2]$ and $\Omega_{\text{right}} = [2, 3]$ with the initial datum $u_0(x) = \text{exp}(-100(x-3/2)^2)$.
The most bare-bone lattice Boltzmann scheme to handle such equation -- yet yielding the difficulties of more sophisticated ones -- is a three velocity scheme $\velocitynumber = 3$ with two conserved moments ($u = m^0$ and $v = m^1$) 
\begin{equation*}
    \normalizedvelocityletter_{0} = 0, \quad \normalizedvelocityletter_{1} = 1, \quad \normalizedvelocityletter_{2} = -1, \quad
    \operatorial{M} = \left (
    \begin{matrix}
     1 & 1 & 1 \\
     0 & \latticevelocity & -\latticevelocity \\
     0 & \latticevelocity^2/2 & \latticevelocity^2/2 \\
    \end{matrix}
    \right ), \quad 
    \operatorial{S} = \text{diag}(0, 0, p), \quad \momentum^{2, \text{eq}} = \frac{c^2}{2}u.
\end{equation*}
We consider $\latticevelocity = 1$, $p = 1.7$ and a final time $T=1.5625$ for each simulation.
The initial data are taken at equilibrium.

\subsection{Results}

To quantify the amplitude of the reflected wave, we simulate on different configurations:
\begin{itemize}
    \item $\average{u}^{\text{jump}}$ is the solution obtained using the spatial discretization illustrated on Fig.~ \ref{fig:configuration}, the left sub-domain $\Omega_{\text{left}}$ is finely meshed with level $\levelleft = \maxlevel$, whereas the right sub-domain $\Omega_{\text{right}}$ is coarsely meshed with level $\levelright = \minlevel \leq \levelleft$. We vary the jump gap $\leveljump := |\levelleft - \levelright|$. This configuration is the monodimensional equivalent of that presented by \cite{lagrava2012revisiting}.
    \item $\average{u}^{\text{ref}}$ is the reference solution obtained on the uniform mesh at the finest level $\maxlevel$.
    \item $\average{u}^{\text{coarse}}$ is the solution obtained on the uniform mesh at the coarsest level $\minlevel$.
\end{itemize}

We introduce the $L^1$-norm of the exact solution $u$ for the sake of normalizing: $\lVert u(t, \cdot)\rVert_{1, \spacestep} = \sum_{\indexletter} \spacestep |u(t, x_{\maxlevel, \indexletter}) | \simeq \lVert u_0\rVert_{1, \spacestep}$. We measure
\begin{align*}
\text{E}_{\text{ref}}(t) &= \dfrac{\lVert \average{u}^{\text{ref}}(t, \cdot) - u(t, \cdot) \rVert_{1, \spacestep} }{\lVert u(t, \cdot)\rVert_{1, \spacestep}},  \\
 \text{E}_{\text{coarse}}(t) &= \dfrac{\lVert \reconstructed{\average{u}}{\text{coarse}}{}(t) - u(t, \cdot) \rVert_{1, \spacestep} }{\lVert u(t, \cdot)\rVert_{1, \spacestep}}, \qquad \text{D}_{\text{coarse}}(t) = \dfrac{\lVert \reconstructed{\average{u}}{\text{coarse}}{}(t) - \average{u}^{\text{ref}}_{}(t) \rVert_{1, \spacestep} }{\lVert u(t, \cdot)\rVert_{1, \spacestep}}, 
\end{align*}
\begin{align*}
 \text{E}_{\text{jump}}(t) &= \dfrac{\lVert \reconstructed{\average{u}}{\text{jump}}{}(t) - u(t,\cdot) \rVert_{1, \spacestep} }{\lVert u(t, \cdot)\rVert_{1, \spacestep}}, \qquad \text{D}_{\text{jump}}(t) = \dfrac{ \lVert \reconstructed{\average{u}}{\text{jump}}{}(t) - \average{u}^{\text{ref}}_{}(t)\rVert_{1, \spacestep} }{\lVert u(t, \cdot)\rVert_{1, \spacestep}},\\
 \text{D}_{\text{jump-refl}}(t) &= \dfrac{\lVert ( {\average{u}}^{\text{jump}}_{}(t) - \average{u}^{\text{ref}}_{}(t) ) \chi_{\{\indexletter ~:~ \cellletter_{\levelleft, \indexletter} \subset \Omega_{\text{left}}\} } \rVert_{1, \spacestep} }{\lVert u(t, \cdot)\rVert_{1, \spacestep}}.
\end{align*}

Here the double hat operator $\reconstructed{~}{~}{~}$ represents the application of the same interpolation used to update the ghost cells. This step is needed to compare solution on the same mesh.
Using the analysis of \cite{bellotti2021lbmmreqeq}, we can prove an estimate on the amplitude of the reflected wave $\text{D}_{\text{jump-refl}}(T)$:
\begin{proposition}\label{prop:Convergence}
    Assume the solution of the problem with level jump in the computational mesh to be smooth, namely of class $\mathcal{C}^4$, for any time $t \in [0, T]$, then we have
    \begin{equation*}
        \text{D}_{\mathrm{jump-refl}}(T) = \bigO{\spacestep^4}.
    \end{equation*}
\end{proposition}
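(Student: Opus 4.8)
The plan is to bound directly the grid function $e := \average{u}^{\text{jump}} - \average{u}^{\text{ref}}$ restricted to the fine cells $\cellletter_{\levelleft, \indexletter} \subset \Omega_{\text{left}}$, which is exactly what is summed in $\text{D}_{\text{jump-refl}}$. Since the two runs share the same initial datum, $e(0) = 0$, and since on the interior of $\Omega_{\text{left}}$ both solutions obey the \emph{same} uniform fine-level scheme (collision followed by the transport \eqref{eq:StreamPhase} with $\leveldifference = 0$), their local truncation errors there are identical and cancel in the difference. Consequently $e$ solves the fine scheme driven by a source supported in the thin layer of cells adjacent to the jump at $x = 2$, where the jump run fills its ghost populations by the multiresolution reconstruction from the coarse cells of $\Omega_{\text{right}}$ instead of using genuine fine values. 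The first step is therefore to write the discrete evolution of $e$ and to identify this interface defect as its only forcing.

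Next I would estimate the interface defect. Writing the reconstruction as a linear operator acting on the coarse data, I split it into two contributions. The first is the multiresolution \emph{detail} of the (smooth) reference solution, i.e. the gap between the genuine fine ghost value and its prediction from the coarse averages; because the weights $\reconstructionweight_{\leveldifference, m}^{\populationindex}$ in \eqref{eq:StreamPhase} are built from the centered stencil with coefficients $9/8$ and $-1/8$, which reproduces cubic polynomials exactly, the $\mathcal{C}^4$-regularity makes this detail of order $\spacestep^4$ for a fixed jump gap $\leveljump$. The second contribution is the reconstruction of the coarse-region discrepancy $\average{u}^{\text{jump}} - \average{u}^{\text{ref}}$ seen at the coarse level on $\Omega_{\text{right}}$.

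The crux is to show that this coarse-region discrepancy is itself of order $\spacestep^4$, so that no lower-order term leaks back into $\Omega_{\text{left}}$. Here I would invoke the equivalent-equation analysis of \cite{bellotti2021lbmmreqeq}: since the wave system is linear and the chosen equilibrium $\momentum^{2, \text{eq}} = \frac{c^2}{2} u$ is linear, coarse averaging commutes exactly with the (spatially homogeneous) collision step, so the \emph{only} mechanism by which the adaptive scheme departs from the reference scheme is again the reconstruction in the transport phase, whose defect is of order $\spacestep^4$ by the same cubic-reproduction argument. This is precisely the statement that our method does not alter the reference lattice Boltzmann scheme beyond local contributions of order four in the space step. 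Thus the whole error $e$ obeys the fine scheme forced by a source of amplitude $\bigO{\spacestep^4}$.

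It remains to propagate this bound over the fixed horizon $[0, T]$. Using the $L^1$ (weak) stability of the reference scheme for the linear wave system together with a discrete Duhamel representation, and the fact that the finite transport speed $c < 1$ makes only a bounded number of source contributions interact with $\Omega_{\text{left}}$ before time $T$, one sums the $\spacestep^4$ forcing to obtain $\lVert e(T) \rVert_{1, \spacestep} = \bigO{\spacestep^4}$. Dividing by $\lVert u(T, \cdot)\rVert_{1, \spacestep} \simeq \lVert u_0\rVert_{1, \spacestep} = \bigO{1}$ yields $\text{D}_{\text{jump-refl}}(T) = \bigO{\spacestep^4}$. The main obstacle is this last propagation step: one must rule out any amplification as the localized interface defect is partly reflected into $\Omega_{\text{left}}$, which requires a genuine stability estimate for the coupled jump scheme and uniform control, as $\spacestep \to 0$ at fixed $\leveljump$, of both the width of the interface layer carrying the source and the $\mathcal{C}^4$-norm constants entering the reconstruction bound.
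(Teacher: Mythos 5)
Your proposal follows essentially the same architecture as the paper's proof: isolate the ghost-cell reconstruction at the jump as the only place where the jump run and the reference run differ, bound that defect by $\bigO{\spacestep^4}$ by importing the Taylor-expansion analysis of \cite{bellotti2021lbmmreqeq}, and propagate the localized defect over $[0,T]$ using stability of the fine scheme (what you phrase as a discrete Duhamel representation, the paper implements as a one-step telescoping recursion on $\text{D}_{\text{jump-refl}}$). However, two of your justifications are wrong as stated. First, the cubic-exactness claim: the prediction built on the weights $9/8$, $-1/8$, i.e. $f_{\levelletter,\indexletter} \pm \tfrac{1}{8}(f_{\levelletter,\indexletter-1}-f_{\levelletter,\indexletter+1})$ for the two children, reproduces cell averages of polynomials of degree at most \emph{two}, not three (test $f(x)=x^3$ on coarse cells $[-3h,-h]$, $[-h,h]$, $[h,3h]$: the predicted average on the child $[-h,0]$ is $-\tfrac{5}{2}h^3$, while the exact one is $-\tfrac{1}{4}h^3$). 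Hence the raw prediction error of a $\mathcal{C}^4$ solution is $\bigO{\spacestep^3}$, not $\bigO{\spacestep^4}$ --- which is precisely why the paper reports $\text{D}_{\text{coarse}}(T)=\bigO{\spacestep^3}$. The fourth order in the proposition is not a property of the stencil alone; it arises from cancellations of the third-order terms in the complete transport formula \eqref{eq:StreamPhase}, and this is exactly what the citation of the Taylor expansions of \cite{bellotti2021lbmmreqeq} supplies. Your argument only survives because you also invoke that reference, not because of the stencil property you state.

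Second, and more seriously, your final accounting rests on the false statement that ``only a bounded number of source contributions interact with $\Omega_{\text{left}}$ before time $T$''. A fresh defect of amplitude $\bigO{\spacestep^4}$ is deposited on the interface cell at \emph{every} one of the $T/\timestep = \latticevelocity T/\spacestep$ time steps, and each of these propagates leftward into $\Omega_{\text{left}}$ (this is what builds up the reflected wave, which at time $T$ occupies a region of $O(1)$ width). The bookkeeping that actually yields the result is the paper's: each deposit contributes $\spacestep\cdot\bigO{\spacestep^4}=\bigO{\spacestep^5}$ to the cell-width-weighted $L^1$ norm, there is no amplification of previously deposited errors (transport moves them one cell per step and collision is stable), and summing $\bigO{1/\spacestep}$ such contributions gives $\bigO{\spacestep^4}$. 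As written, your count is internally inconsistent: a bounded number of deposits, each of $L^1$ size $\bigO{\spacestep^5}$, would give $\bigO{\spacestep^5}$, while $\bigO{1/\spacestep}$ deposits of ``amplitude'' $\spacestep^4$ with no cell-width factor would give $\bigO{\spacestep^3}$. You land on the correct exponent only because omitting the factor $\spacestep$ (cell width in the $L^1$ norm) and omitting the factor $1/\spacestep$ (number of time steps) compensate each other; this step needs to be redone as in the paper's recursion.
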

\begin{proof}
    Let us introduce the following indices for the cells close to the level jump
    \begin{align*}
            \overleftarrow{\indexletter} &= \{ \indexletter \quad \text{such that} \quad \cellletter_{\levelleft, \indexletter} \subset \Omega_{\text{left}} \quad \text{and} \quad \cellletter_{\levelleft, \indexletter}  \cap \partial \Omega_{\text{left}} \neq \varnothing \}, \\
            \overrightarrow{\indexletter} &= \{ \indexletter \quad \text{such that} \quad \cellletter_{\levelleft, \indexletter} \subset \Omega_{\text{right}} \quad \text{and} \quad \cellletter_{\levelleft, \indexletter}  \cap \partial \Omega_{\text{right}} \neq \varnothing \}, \\
            \underrightarrow{\indexletter} &= \{ \indexletter \quad \text{such that} \quad \cellletter_{\levelright, \indexletter} \subset \Omega_{\text{right}} \quad \text{and} \quad \cellletter_{\levelright, \indexletter}  \cap \partial \Omega_{\text{right}} \neq \varnothing \},
    \end{align*}
    then performing the Taylor expansions detailed in \cite{bellotti2021lbmmreqeq}, we claim that for every time $t$
    \begin{equation*}
        \average{\distribution}_{\levelright, \underrightarrow{\indexletter} }^{\text{jump}, \populationindex}(t) = \average{\distribution}_{\levelleft, \overrightarrow{\indexletter} }^{\text{ref}, \populationindex}(t) + \bigO{\spacestep^4}.
    \end{equation*}
    The collision phase is linear, otherwise we may use a local Lipschitz property of the equilibria. 
    \begin{equation*}
        \average{\distribution}_{\levelright, \underrightarrow{\indexletter} }^{\text{jump}, \populationindex, \collided}(t) = \average{\distribution}_{\levelleft, \overrightarrow{\indexletter} }^{\text{ref}, \populationindex, \collided}(t) + \bigO{\spacestep^4}.
    \end{equation*}
    The update of the ghost cell $\cellletter_{\levelleft, \overrightarrow{\indexletter}}$ is a linear combination of values between which we have those on $\cellletter_{\levelright, \underrightarrow{\indexletter}}$, therefore
    \begin{equation*}
        \average{\distribution}_{\levelleft, \overrightarrow{\indexletter} }^{\text{jump}, \populationindex, \collided}(t) = \average{\distribution}_{\levelleft, \overrightarrow{\indexletter} }^{\text{ref}, \populationindex, \collided}(t) + \bigO{\spacestep^4}.
    \end{equation*}
    Concentrating on the stream phase of $\normalizedvelocityletter_2 = -1$, we have
    \begin{equation*}
        \begin{cases}
            \average{\distribution}_{\levelleft, \overleftarrow{\indexletter}}^{\text{jump}, 2}(t + \timestep) = \average{\distribution}_{\levelleft, \overrightarrow{\indexletter} }^{\text{jump}, 2, \collided}(t) &= \average{\distribution}_{\levelleft, \overrightarrow{\indexletter} }^{\text{ref}, 2, \collided}(t) + \bigO{\spacestep^4}, \\
            \average{\distribution}_{\levelleft, \overleftarrow{\indexletter}}^{\text{ref}, 2}(t + \timestep) &= \average{\distribution}_{\levelleft, \overrightarrow{\indexletter} }^{\text{ref}, 2, \collided}(t).
        \end{cases}
    \end{equation*}
    Thus by computing the moment $u$ and taking the difference, we deduce that for every time
    \begin{equation*}
        |\average{u}_{\levelleft, \overleftarrow{\indexletter}}^{\text{jump}}(t + \timestep) - \average{u}_{\levelleft, \overleftarrow{\indexletter}}^{\text{ref}}(t + \timestep)| = \bigO{\spacestep^4}.
    \end{equation*}
    The CFL condition on the finest resolution imposes that information, thus the errors, propagate of one cell for one time step. The constant $C > 0$ carries the normalization
    \begin{align*}
        \text{D}_{\text{jump-refl}}(T) &= C \Bigg ( \sum_{\substack{\indexletter \neq \overleftarrow{\indexletter} \text{s.t.} \\ \cellletter_{\maxlevel, \indexletter} \subset \Omega_{\text{left}}}} \hspace{-0.5cm} \spacestep |\average{u}^{\text{jump}}_{\maxlevel, \indexletter}(T) - \average{u}^{\text{ref}}_{\maxlevel, \indexletter}(T)|  + \spacestep |\average{u}^{\text{jump}}_{\maxlevel, \overleftarrow{\indexletter} }(T) - \average{u}^{\text{ref}}_{\maxlevel, \overleftarrow{\indexletter} }(T)| \Bigg ), \\
        &\leq C \adaptiveroundbrackets{\dfrac{1}{C} \text{D}_{\text{jump-refl}}(T - \timestep) + \bigO{\spacestep^5}} \leq \dots \leq \dfrac{T}{\timestep} \bigO{\spacestep^5} = \dfrac{\latticevelocity T}{\spacestep} \bigO{\spacestep^5}, \\
        &= \bigO{\spacestep^4}.
    \end{align*}

\end{proof}

The results of the numerical simulations are provided in Table \ref{tab:gaussian_fine_coarse}.
We have also run comparisons with the method from Fakhari and Lee \cite{fakhari2014finite} and that of Rohde \emph{et al.} \cite{rohde2006generic} presented in Fig.~ \ref{fig:comparison}.

    \begin{table}\caption{\label{tab:gaussian_fine_coarse}Results for the transition between fine and coarse mesh with the Gaussian as initial datum. Numerical convergence rates are reported between parenthesis.}
        \begin{center}
            \begin{footnotesize}
            \begin{tabular}{cccccccc}
            $\maxlevel$ & $\textrm{E}_{\text{ref}}(T)$ & $\textrm{E}_{\text{coarse}}(T)$ & $\textrm{D}_{\text{coarse}}(T)$ & $\textrm{E}_{\text{jump}}(T)$ & $\textrm{D}_{\text{jump}}(T)$ & $\textrm{D}_{\text{jump-refl}}(T)$\\
            \toprule
             & \multicolumn{6}{c}{$\levelletter_{\text{jump}} = 1$} \\
            \midrule
7	& 7.30E-02	\phantom{(0.95)}  & 8.19E-02	\phantom{(0.95)}  & 1.30E-02	\phantom{(0.95)}  & 7.49E-02	\phantom{(0.95)}  & 2.64E-03	\phantom{(0.95)}  & 1.27E-05	\phantom{(0.95)} \\   
8	& 3.78E-02	(0.95)	& 3.89E-02	(1.07)	& 1.86E-03	(2.81)	& 3.80E-02	(0.98)	& 3.84E-04	(2.78)	& 8.90E-07	(3.83) \\
9	& 1.92E-02	(0.98)	& 1.93E-02	(1.01)	& 2.28E-04	(3.03)	& 1.93E-02	(0.98)	& 5.19E-05	(2.89)	& 5.95E-08	(3.90) \\
10	& 9.70E-03	(0.99)	& 9.70E-03	(0.99)	& 2.49E-05	(3.20)	& 9.71E-03	(0.99)	& 6.75E-06	(2.94)	& 3.86E-09	(3.95) \\
11	& 4.87E-03	(0.99)	& 4.87E-03	(0.99)	& 3.67E-06	(2.76)	& 4.87E-03	(0.99)	& 8.65E-07	(2.96)	& 2.46E-10	(3.97) \\
12	& 2.44E-03	(1.00)	& 2.44E-03	(1.00)	& 1.08E-06	(1.77)	& 2.44E-03	(1.00)	& 1.11E-07	(2.96)	& 1.55E-11	(3.99) \\
13	& 1.22E-03	(1.00)	& 1.22E-03	(1.00)	& 3.11E-07	(1.79)	& 1.22E-03	(1.00)	& 1.44E-08	(2.95)	& 9.76E-13	(3.99) \\
            \midrule
             & \multicolumn{6}{c}{$\levelletter_{\text{jump}} = 2$} \\
            \midrule
7	& 7.30E-02	\phantom{(0.95)}  & 1.61E-01	\phantom{(0.95)}	    & 9.47E-02	\phantom{(0.95)}	    & 9.22E-02	\phantom{(0.95)}	    & 2.21E-02	\phantom{(0.95)}	    & 4.84E-04 \phantom{(0.95)} \\
8	& 3.78E-02	(0.95)	& 5.00E-02	(1.68)	& 1.68E-02	(2.50)	& 4.04E-02	(1.19)	& 3.50E-03	(2.66)	& 3.10E-05	(3.96) \\
9	& 1.92E-02	(0.97)	& 2.06E-02	(1.28)	& 2.24E-03	(2.90)	& 1.96E-02	(1.05)	& 4.71E-04	(2.89)	& 2.03E-06	(3.94) \\
10	& 9.70E-03	(0.99)	& 9.82E-03	(1.07)	& 2.63E-04	(3.09)	& 9.74E-03	(1.01)	& 6.07E-05	(2.96)	& 1.31E-07	(3.96) \\
11	& 4.87E-03	(0.99)	& 4.87E-03	(1.01)	& 2.78E-05	(3.24)	& 4.88E-03	(1.00)	& 7.70E-06	(2.98)	& 8.32E-09	(3.97) \\
12	& 2.44E-03	(1.00)	& 2.44E-03	(1.00)	& 4.67E-06	(2.57)	& 2.44E-03	(1.00)	& 9.70E-07	(2.99)	& 5.25E-10	(3.99) \\
13	& 1.22E-03	(1.00)	& 1.22E-03	(1.00)	& 1.39E-06	(1.75)	& 1.22E-03	(1.00)	& 1.22E-07	(2.99)	& 3.30E-11	(3.99) \\
            \midrule
             & \multicolumn{6}{c}{$\levelletter_{\text{jump}} = 3$} \\
                \midrule
7	& 7.30E-02	\phantom{(0.95)} & 4.33E-01	\phantom{(0.95)}   & 3.62E-01	\phantom{(0.95)}   & 1.83E-01	\phantom{(0.95)}   & 1.12E-01	\phantom{(0.95)}   & 9.81E-03	\phantom{(0.95)} \\
8	& 3.78E-02	(0.95)	& 1.25E-01	(1.79)	& 9.31E-02	(1.96)	& 5.79E-02	(1.66)	& 2.26E-02	(2.31)	& 7.10E-04	(3.79) \\
9	& 1.92E-02	(0.97)	& 3.02E-02	(2.05)	& 1.45E-02	(2.68)	& 2.16E-02	(1.42)	& 3.12E-03	(2.86)	& 4.13E-05	(4.10) \\
10	& 9.70E-03	(0.99)	& 1.07E-02	(1.49)	& 1.80E-03	(3.01)	& 9.98E-03	(1.12)	& 3.97E-04	(2.98)	& 2.64E-06	(3.97) \\
11	& 4.87E-03	(0.99)	& 4.94E-03	(1.12)	& 1.99E-04	(3.18)	& 4.91E-03	(1.02)	& 4.96E-05	(3.00)	& 1.68E-07	(3.97) \\
12	& 2.44E-03	(1.00)	& 2.44E-03	(1.02)	& 2.13E-05	(3.23)	& 2.45E-03	(1.00)	& 6.19E-06	(3.00)	& 1.06E-08	(3.99) \\
13	& 1.22E-03	(1.00)	& 1.22E-03	(1.00)	& 5.23E-06	(2.03)	& 1.22E-03	(1.00)	& 7.74E-07	(3.00)	& 6.66E-10	(3.99) \\
                        \midrule
             & \multicolumn{6}{c}{$\levelletter_{\text{jump}} = 4$} \\
                \midrule
7	& 7.30E-02	\phantom{(0.95)} & 9.10E-01	\phantom{(0.95)}   & 8.43E-01	\phantom{(0.95)}   & 4.00E-01	\phantom{(0.95)}   & 3.29E-01	\phantom{(0.95)}  & 6.02E-02	\phantom{(0.95)} \\
8	& 3.78E-02	(0.95)	& 4.24E-01	(1.10)	& 3.88E-01	(1.12)	& 1.57E-01	(1.35)	& 1.20E-01	(1.45)	& 1.43E-02	(2.07) \\
9	& 1.92E-02	(0.97)	& 9.69E-02	(2.13)	& 8.24E-02	(2.24)	& 3.74E-02	(2.07)	& 2.02E-02	(2.58)	& 8.21E-04	(4.13) \\
10	& 9.70E-03	(0.99)	& 1.78E-02	(2.44)	& 1.08E-02	(2.93)	& 1.16E-02	(1.69)	& 2.42E-03	(3.06)	& 4.42E-05	(4.22) \\
11	& 4.87E-03	(0.99)	& 5.47E-03	(1.71)	& 1.21E-03	(3.15)	& 5.08E-03	(1.20)	& 2.89E-04	(3.07)	& 2.85E-06	(3.95) \\
12	& 2.44E-03	(1.00)	& 2.45E-03	(1.16)	& 1.25E-04	(3.27)	& 2.47E-03	(1.04)	& 3.51E-05	(3.04)	& 1.83E-07	(3.97) \\
13	& 1.22E-03	(1.00)	& 1.21E-03	(1.02)	& 1.98E-05	(2.66)	& 1.23E-03	(1.01)	& 4.34E-06	(3.02)	& 1.15E-08	(3.99) \\		
                         \midrule
             & \multicolumn{6}{c}{$\levelletter_{\text{jump}} = 5$} \\
             \midrule            
7	& 7.30E-02	\phantom{(0.95)} & 1.24E+00	\phantom{(0.95)}   & 1.20E+00	\phantom{(0.95)}   & 6.46E-01	\phantom{(0.95)}   & 5.82E-01	\phantom{(0.95)}   & 1.22E-01	\phantom{(0.95)} \\
8	& 3.78E-02	(0.95)	& 9.30E-01	(0.41)	& 8.96E-01	(0.42)	& 3.88E-01	(0.74)	& 3.50E-01	(0.73)	& 7.35E-02	(0.73) \\
9	& 1.92E-02	(0.97)	& 4.51E-01	(1.04)	& 4.34E-01	(1.05)	& 1.45E-01	(1.42)	& 1.27E-01	(1.47)	& 1.86E-02	(1.98) \\
10	& 9.70E-03	(0.99)	& 8.05E-02	(2.49)	& 7.44E-02	(2.54)	& 2.58E-02	(2.49)	& 1.78E-02	(2.83)	& 8.87E-04	(4.39) \\
11	& 4.87E-03	(0.99)	& 1.06E-02	(2.93)	& 7.74E-03	(3.27)	& 6.32E-03	(2.03)	& 1.80E-03	(3.31)	& 4.31E-05	(4.36) \\
12	& 2.44E-03	(1.00)	& 2.68E-03	(1.98)	& 7.52E-04	(3.36)	& 2.58E-03	(1.29)	& 2.00E-04	(3.18)	& 2.84E-06	(3.93) \\
13	& 1.22E-03	(1.00)	& 1.18E-03	(1.18)	& 8.06E-05	(3.22)	& 1.24E-03	(1.06)	& 2.35E-05	(3.08)	& 1.86E-07	(3.93) \\
                \hline
            \end{tabular}
            \end{footnotesize}
        \end{center}
    \end{table}
    
    \begin{figure}
       \begin{center}
            \includegraphics[width=1.\textwidth]{./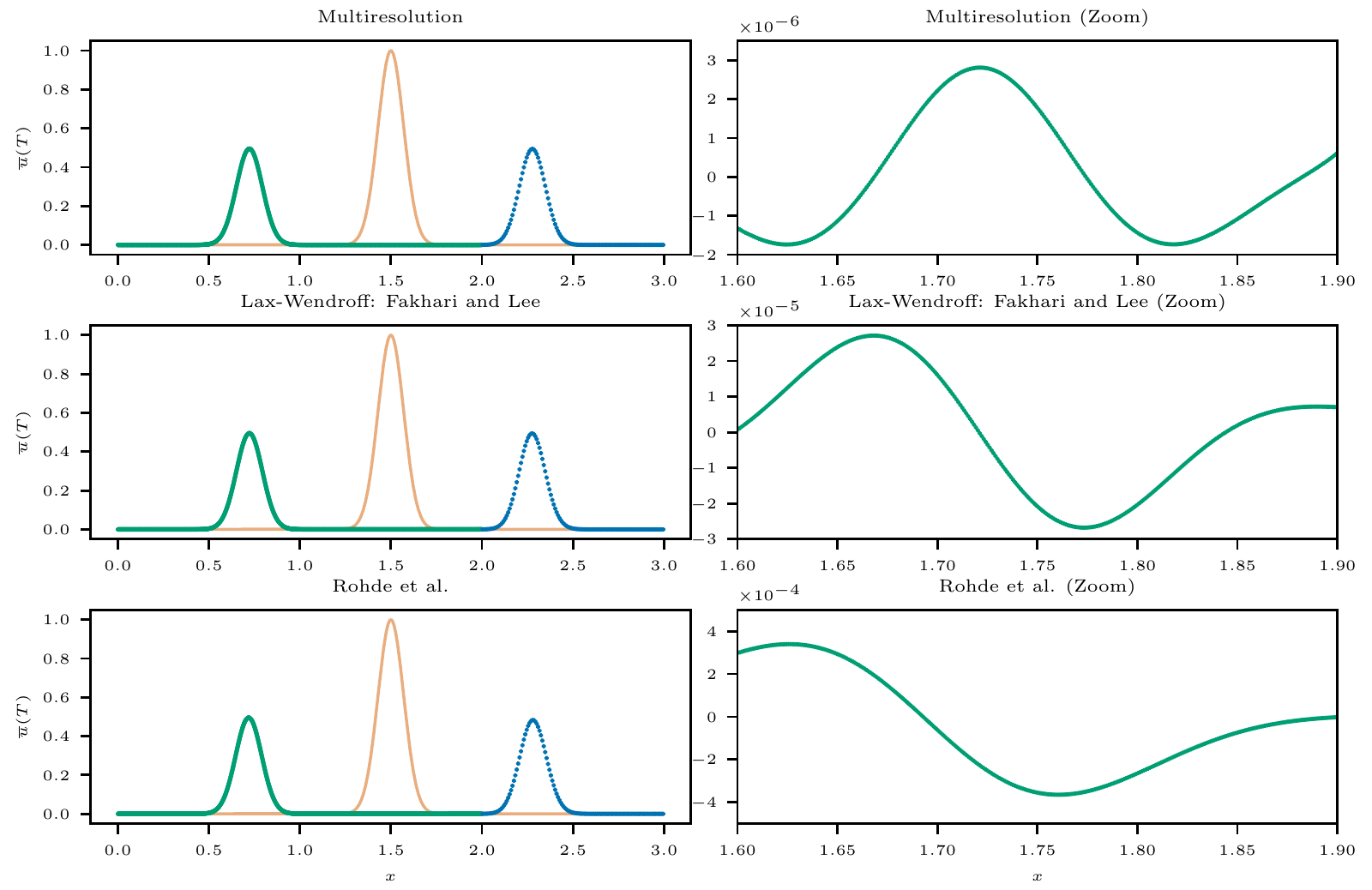}
        \end{center}\caption{\label{fig:comparison}Results of the simulation on the mesh with jump (whole domain on the left, magnification on $[1.6, 1.9]$ and on the $y$ axis on the right). Initial solution in pale orange and solution at $t = T$ in green (left subdomain) and blue (right subdomain). On the first row, we use our multiresolution scheme \cite{bellotti2021multiresolution1d, bellotti2021multidimensional2d, bellotti2021lbmmreqeq}. On the second row, we use the Lax-Wendroff scheme by Fakhari and Lee \cite{fakhari2014finite}. On the third row, the scheme with local time-stepping by Rohde \emph{et al.} \cite{rohde2006generic}. The simulation uses $\levelleft = 10$ and $\leveljump = 3$.}
    \end{figure} 


\subsection{Discussion}

Commenting on Table \ref{tab:gaussian_fine_coarse}, we see that the reference scheme converges linearly $\text{E}_{\text{ref}}(T) = \bigO{\spacestep}$ once refining as expected from the analysis by the equivalent equations \cite{dubois2008equivalent}. The error $\text{E}_{\text{coarse}}(T) \leq \text{E}_{\text{ref}}(T) + \text{D}_{\text{coarse}}(T) = \bigO{\spacestep} + \bigO{\spacestep^3} = \bigO{\spacestep}$ converges linearly as well because the additional difference $\text{D}_{\text{coarse}}(T) = \bigO{\spacestep^3}$ does not influence the overall convergence, as pointed out in \cite{bellotti2021lbmmreqeq}.
The same behavior is observed for the mesh with jump, namely for $\text{E}_{\text{jump}}(T)$ and $\text{D}_{\text{jump}}(T)$. Very interestingly $\text{D}_{\text{jump}}(T) \leq \text{D}_{\text{coarse}}(T)$: counter-intuitively this is \emph{a priori} not granted due to the possible formation of waves reflected at the jump, even though only a part of the domain is coarsened.
This gives a first indication about the fact that the reflected waves are perfectly mastered.
The second indication comes from $\text{D}_{\text{jump-refl}}(T) = \bigO{\spacestep^4}$.
This means that with our method, we are able to decrease the amplitude of the reflected waves with fourth-order convergence in the space step, in accordance with Prop.~\ref{prop:Convergence}.
The supra-convergence compared to $\text{D}_{\text{jump}}(T)$ comes from the fact that at each time step, the reflected wave is generated only on the cell of $\Omega_{\text{right}}$ next to the interface, so that it eventually propagates to the left inside the fine medium without additional amplification of the error.
Observe that the convergence rates worsen for large $\leveldifference$ and for small $\maxlevel$ due to the fact that we are no longer allowed to perform the Taylor expansions needed by Proposition \ref{prop:Convergence}, which are done at the current level of resolution $\levelletter$. Indeed, in this case, one can no longer claim that $2^{\leveldifference} \spacestep$ is $\bigO{\spacestep}$.

Compared to other methods, we can show with the same proof path than Prop.~\ref{prop:Convergence} that the Lax-Wendroff strategy by \cite{fakhari2014finite} yields $\text{D}_{\text{jump-refl}}(T) = \bigO{\spacestep^3}$, which is one order less than our method. This can also be qualitatively seen on Fig.~ \ref{fig:comparison}.
Concerning the approach by \cite{rohde2006generic} where local time-stepping is used, we observe that it yields quite large reflected waves.
This waves are one order of magnitude larger than for the method by \cite{fakhari2014finite} and two orders of magnitude larger than our approach.
However, the local time-stepping prevents us from applying the same theoretical study to this scheme.

Mastering reflected waves at a high order of accuracy is important when our technique is extended to typical multidimensional applications. When simulating the incompressible Navier-Stokes equations \emph{via} a quasi-incompressible D2Q9 scheme, spurious acoustic waves are of order $\bigO{\spacestep^2}$, thus controlling their reflection at order $\bigO{\spacestep^4}$ is a highly desirable feature of the scheme.



\section{Conclusions}

In this contribution, we have briefly presented our adaptive lattice Boltzmann method, which is studied in Prop.~\ref{prop:Convergence} using the technique introduced in \cite{bellotti2021lbmmreqeq} to conclude that in case of a fixed mesh jump, the amplitude of the spuriously reflected waves is of order $\bigO{\spacestep^4}$.
This fact is numerically verified and compared to the performance of other approaches available in the literature \cite{fakhari2014finite} and \cite{rohde2006generic}, showing that our method outperforms these traditional approaches. 

It is worthwhile observing that the original method \cite{bellotti2021multiresolution1d, bellotti2021multidimensional2d} was conceived to be used with dynamically adapted meshes which automatically follow waves and fronts with finer discretizations once their lack of regularity justifies the depart from a coarse uniform mesh.
Thus, in this case, we even do not expect the $\bigO{\spacestep^4}$ perturbation because fronts never cross level jumps but are precisely and successfully ``chased'' by the fine discretization. 
We observe that multiresolution is not relevant for systems developing homogeneous isotropic turbulence. However, dealing with spatially large problems where turbulent flows at high Reynolds number are present in a small portion of the domain could be interesting and advantageous. The analysis of this framework could be tackled in publications to come with a tailored data structure.

    
For the sake of a quick and effective presentation, we have restrained the study to the one-dimensional setting. However, the generalization to higher spatial dimensions is straightforward and follows the indications of \cite{bellotti2021lbmmreqeq}, since the operators involved in the multiresolution analysis are extended \cite{bellotti2021multidimensional2d} by tensor product in the remaining directions of the space.
We have proved in \cite{bellotti2021lbmmreqeq} that the multidimensional extension of the method retains the same accuracy levels in every spatial direction.

\section*{Acknowledgments}

The authors thank Pierre Lallemand and Fran\c{c}ois Dubois for the stimulating discussions during the ``groupe de travail Schémas de Boltzmann sur réseau'' at the IHP Paris and the anonymous referee for the valuable inputs and enhancements he/she suggested.

\bibliographystyle{acm}
\bibliography{biblio}

\end{document}